\documentclass[12pt]{amsart}
\usepackage{amssymb,enumerate,mathrsfs, amsmath, amsthm, bbm}
\usepackage{url,titletoc,enumitem}
\usepackage[utf8]{inputenc}

\usepackage[usenames,dvipsnames]{xcolor}
\definecolor{darkblue}{rgb}{0.0, 0.0, 0.55}
\usepackage{cmap}

\newtheorem{thm}{Theorem}[section]

\newtheorem{cor}[thm]{Corollary}
\newtheorem{exa}[thm]{Example}
\newtheorem{lem}[thm]{Lemma}
\newtheorem{rmk}[thm]{Remark}
\newtheorem{defi}[thm]{Definition}


\numberwithin{equation}{subsection}

\usepackage{color}
\definecolor{orange}{rgb}{1, 0.55, 0.0}

\def\CR{ \color{red} }

\def\ssec{\subsection}
\def\sssec{\subsubsection}

\def\ben{\begin{enumerate}}
\def\een{\end{enumerate}}

\def\bem{\begin{pmatrix}}
\def\eem{\end{pmatrix}}

\def\beq{\begin{equation}}
\def\eeq{\end{equation}}

\def\alp{{\alpha}}
\def\talp{{\widetilde \alpha}}
\def \talpha {\tilde{\alpha}}
\def \bmone {\mathbbm{1}}

\def \pars{{(s)}}

\def\del{{\delta}}
\def\alp{{\alpha}}

\def\C{{\mathbb{C}}}
\def\R{{\mathbb{R}}}

\def\bs{\bigskip}

\def\onetaut{\bmone^{\tau(\tilde \alpha) }}
\def\onetau{\bmone^{\tau(\alpha)}}

\def\onea{\bmone^\alpha}
\def\onea{\bmone^\alpha}
\def\oneta{\bmone^{\talpha}}
\def\oneb{\bmone^\beta}

\def\nc{noncommutative}

\def\lps{linear product sum}

\def\wdec{Waring decomposition}

\def\tgdd{{S^g_{\delta d} }}

\newcommand{\df}[1]{{\bf{#1}}{\index{#1}}}

\title[Efficient NC Polynomial Evaluation]{Efficient evaluation of noncommutative polynomials using tensor and noncommutative Waring decompositions}

\author[E. Evert]{Eric Evert${}^{1}$}
\address{Eric Evert, Group Science, Engineering and Technology\\
 KU Leuven Kulak, \\
  E. Sabbelaan 53, 8500 Kortrijk, Belgium \\
  and
  \newline
   Electrical Engineering ESAT/STADIUS\\
  KU Leuven, \\
  Kasteelpark Arenberg 10, 3001 Leuven, Belgium
   }
   \email{eric.evert@kuleuven.be}
\thanks{${}^1$Research supported by the NSF grant
DMS-1500835}

\author[J.W. Helton]{J. William Helton${}^1$}
\address{J. William Helton, Department of Mathematics\\
  University of California \\
  San Diego
   }
   \email{helton@math.ucsd.edu }

\author[S. Huang]{Shiyuan Huang${}^1$}
\address{Shiyuan Huang, Department of Computer Science\\
  Columbia University \\
  New York City
   }
   \email{shh029@ucsd.edu}

\author[J. Nie]{Jiawang Nie}
\address{Jiawang Nie, Department of Mathematics\\
  University of California \\
  San Diego
   }
   \email{njw@math.ucsd.edu } 


\subjclass[2010]{Primary 11P05, 46L52. Secondary 15A69, 47A56}
\date{\today}
\keywords{noncommutative polynomials, Waring problem, sums of powers, matrix variables, symmetric tensors}

\allowdisplaybreaks
\makeindex

\begin{document}

\begin{abstract}

This paper analyses a Waring type decomposition of a noncommuting (NC) polynomial $p$ with respect to the goal of evaluating $p$ efficiently on tuples of matrices. Such a decomposition can reduce the number of matrix multiplications needed to evaluate a noncommutative polynomial and is valuable when a single polynomial must be evaluated on many matrix tuples.

In pursuit of this goal we examine a noncommutative analog of the classical Waring problem and various related decompositions.
For example, we consider a “Waring decomposition” in which each product of linear terms is actually a power of a single linear NC polynomial or more generally a power of a homogeneous NC polynomial.
We describe how NC polynomials compare to commutative ones
with regard to these decompositions, describe a method for computing the NC decompositions and compare the effect of various decompositions on  the speed of evaluation of generic NC polynomials.

\end{abstract}

\maketitle

\section{Introduction}
This paper concerns decompositions of noncommutative polynomials as sums of products of linear polynomials. The goal is to find ways of quickly evaluating noncommutative polynomials on tuples of matrices. 

A place where efficient evaluations matter comes in numerical solution of problems arising in linear systems and control. Problems which are completely specified by signal flow diagrams having  $L^2$ signals all take the form of solving collections of matrix inequalities based on polynomial matrix inequalities. For example, see \cite{CHS06}.

After changes of variables, some basic problems of this type convert to solving Linear Matrix Inequalities (whose coefficients are  functions of the given system parameters)  and for these there are numerous numerical optimization schemes \cite{WSV00}. As with all optimization algorithms these require  very many function evaluations.

\cite{CHS06} showed how, using NC symbolic software, one could produce optimization algorithms whose linear subproblem has coefficients which are NC polynomials in the current iterate $\chi^{(k)}$. As $\chi^{(1)}, \chi^{(2)}, \dots$ progresses toward the optimum, many function evaluations of NC polynomials are required.

The striking fact is that the NC polynomials $p_1, \dots,p_s$ which must be evaluated depend only on the signal flow diagram and on the numerical optimization algorithm in the package. They do not depend on what is being designed, e.g.. a ship controller, airplane controller or helicopter controller (not to mention which ship, which plane etc).

Thus in the lifetime of a popular software toolbox a few specific polynomials must be evaluated billions (at least) of times on matrices of various sizes.

Pursuits involving noncommutative polynomials are in the spirit of the burgeoning area called {\it free analysis}.
Here one takes classical problems and works out analogues with \nc \ variables,
which are {\it free} of constraints.
These free analogues typically have interpretations for matrix or operator variables
and their development often impacts various areas.

One of the original efforts here was Voiculescu's free probability,
which started by developing a notion of entropy for operator variables
and which has a become a big area having many associations to
random matrix theory, \cite{MS17}.
Some other directions are  free
analytic function theory, cf. \cite{KVV14} and   free real algebraic geometry \cite{BKP16}
with some consequences for system engineering being \cite{HMPV09}. Our paper concerns and gives applications for the \nc \ variant of the classical Waring problem.

\subsection{Noncommutative polynomials}
\ \ \ \ 
We work with functions of
 $g$ noncommutative variables
$$x = (x_1, x_2, ..., x_g)$$
and are interested in  powers of
  linear functions
$$L_s(x): = A^\pars_1x_1 + A^\pars_2x_2 + ... A^\pars_g x_g,$$
where $s$ is an index and $A^\pars_i
 \in \mathbb{R} \ or \ \mathbb{C}$ for $1 \le i \le g$.

For any (index) tuple $\alpha = (\alpha_1, \alpha_2, ..., \alpha_d)$, where $\alpha_i $ for $1 \le i \le d$ are  integers between 1 and g, we denote
$$ x^\alpha = x_{\alpha_1}x_{\alpha_2}x_{\alpha_3}...x_{\alpha_d}.$$
 We say the monomial $x^\alpha$ has \df{degree}  $d$.
For example,
if $\alpha = ( 1, 2, 1,3)$, then
$x^\alpha = x_1x_2x_1x_3$ is a degree $4$ monomial.

 A \df{noncommutative (NC) polynomial} is a formal sum of the form
\[
p(x)= \sum_\alpha P_\alpha x^\alpha
\]
where $P_\alpha \in \R$ or $\C$ for each $\alpha$ and only finitely many of the $P_\alpha$ are nonzero. The \df{degree} of a NC polynomial is equal to that of its highest degree monomial which has a nonzero coefficient. If all monomials of a NC polynomial with nonzero coefficients have the same degree, then the NC polynomial is \df{homogeneous}. 

 Let $p(x)=\sum_{|\alpha| \leq d} P_\alpha x^\alpha$ be a noncommutative polynomial in $g$ noncommutative variables. Then for any $n$ and for any $g$-tuple of $n \times n$ matrices $X=(X_1, \dots, X_g)$, we define the \df{evaluation} of $p$ on $X$ by
\[
p(X)=\sum_{|\alpha| \leq d} P_\alpha X^\alpha
\]
where $X^0=I_n$. A question of practical interest is how to efficiently evaluate a NC polynomial on a collection of matrix tuples. 

In this article we show that tensor decompositions may be used to significantly reduce the number of matrix multiplications needed to evaluate a noncommutative polynomial. Here a \df{tensor} is a multiindexed array $T \in (\mathbb C^g)^{\otimes d}$ with entries $T(\alpha) \in \mathbb C$ where $\alpha= (\alpha_1, \dots, \alpha_d)$ is a $d$-tuple of integers between $1$ and $g$.

Our general strategy is as follows. First one associates a homogeneous noncommutative polynomial $p$  to a tensor $T_p \in (\mathbb{C}^g)^{\otimes d}$. By computing the tensor decomposition of the associated tensor, one gets a decomposition that expresses the NC polynomial as a sum of products of linear terms. This reduces the number of matrix multiplications needed to evaluate $p$.

The nonhomogeneous setting can easily be handled can easily be handled by sorting $p$ as a sum of homogeneous polynomials. Additionally, one could homogenize the polynomials with a dummy variable (say $x_0$), then replace $x_0$ with $1$ after a factorization is obtained.

\subsubsection{Evaluation using tensor decompositions}
\label{sec:NCPolyTensor}
Let 
\beq
\label{eq:pForTensorDecomp}
p(x)= \sum_{|\alpha|=d} T(\alpha) x^\alpha
\eeq
 be a homogeneous degree $d$ noncommutative polynomial in $g$ variables $x=(x_1, \dots, x_g)$. We can associate $p$ to the tensor $T_p=(T(\alpha))_{|\alpha|=d}$. Suppose that $T_p$ has a rank $r$ decomposition
\[
T_p= \sum_{s=1}^r A^\pars (1) \otimes A^\pars (2) \otimes \cdots \otimes A^\pars (d)
\qquad 
where \ \  A^\pars (i)=
\bem
 A^\pars_1 (i)\\
  \vdots\\
   A^\pars_g (i)
 \eem   \in \mathbb C^g
\] for each $i$ and $s$. Then we have
\beq
\label{eq:ptgen}
\begin{aligned}
p(x) & = \sum_{s=1}^r \prod_{i=1}^d \Big(A^\pars_1 (i) x_1 + A^\pars_2 (i) x_2 + \cdots + A^\pars_g (i) x_g\Big)\\
&= \sum_{|\alpha|=d} \bigg(  \sum_{s=1}^r \prod_{i=1}^d A^\pars_{\alpha_i} (i) \bigg) x^\alpha.
\end{aligned}
\eeq
We call a decomposition of the form \eqref{eq:ptgen} a \df{\lps } for the NC polynomial $p$. Additionally, if $r$ is as small as possible, we say $p$ has \df{product sum rank} $r$. Before continuing we give an example.

\sssec{Example}
Consider the noncommutative polynomial 
\beq
\label{eq:pLongForm}
\begin{array}{rclcl}
p(x)&=& 20 x_1 x_1 x_1 + 50 x_1 x_2 x_1+20 x_1 x_3 x_1-30 x_2 x_1 x_1-75 x_2 x_2 x_1 \\
&&-30 x_2 x_3 x_1-10 x_3 x_1 x_1 -25 x_3 x_2 x_1- 10 x_3 x_3 x_1 -8 x_1 x_1 x_2 \\
&&-62 x_1 x_2 x_2 -35 x_1 x_3 x_2 +46 x_2 x_1 x_2 +59 x_2 x_2 x_2+10 x_2 x_3 x_2 \\
&& +26 x_3 x_1 x_2+9 x_3 x_2 x_2 -10 x_3 x_3 x_2 +44 x_1 x_1 x_3  +26 x_1 x_2 x_3 \\
&& -10 x_1 x_3 x_3 +2 x_2 x_1 x_3-107 x_2 x_2 x_3-70 x_2 x_3 x_3 +22 x_3 x_1 x_3 \\
&& -57 x_3 x_2 x_3 -50 x_3 x_3 x_3.
\end{array}
\eeq
Think of its coefficients $p_{ijk}$ for $i,j,k =1, 2, 3$ as entries of a tensor $T_p$ with frontal slices
$$
T_p(:,:,1)=\begin{pmatrix} 
20 & 50 & 20 \\
-30 & -75 & -30 \\
-10 & -25 & -10
\end{pmatrix}
\qquad
and
\qquad 
T_p(:,:,2)=\begin{pmatrix} 
-8 & -62 & -35 \\
46 & 59 & 10 \\
26 & 9 & -10
\end{pmatrix}
$$
and
$$
T_p(:,:,3)=\begin{pmatrix} 
44 & 26 & -10 \\
2 & -107 & -70 \\
22 & -57 & -50
\end{pmatrix}
$$
where $T_p(:,:,i)$ is the standard Matlab index notation.
One can check that
$p$ has the rank 2 decomposition
\beq
\label{eq:exTrank2}
T= \begin{pmatrix} 
-3 \\
-4 \\
-4
\end{pmatrix} 
\otimes
\begin{pmatrix} 
-4 \\
4 \\
5
\end{pmatrix} 
\otimes
\begin{pmatrix} 
0 \\
1 \\
2
\end{pmatrix} 
 \ + \ 
\begin{pmatrix} 
-2 \\
3 \\
1
\end{pmatrix} 
\otimes
\begin{pmatrix} 
2 \\
5 \\
2
\end{pmatrix} 
\otimes
\begin{pmatrix} 
-5 \\
5 \\
-5 
\end{pmatrix}.
\eeq
It follows from \eqref{eq:exTrank2} that $p$ has the rank $2$ \lps \  decomposition
\beq
\label{eq:pShortForm}
\begin{array}{rclcl}
p(x)&=& (-3x_1-4x_2-4x_3)(-4x_1+4x_2+5x_3)(x_2+2x_3)\\
& &+(-2x_1+3x_2+x_3)(2x_1+5x_2+2x_3)(-5x_1+5x_2-5x_3)
\end{array}
\eeq
which one can check using NCAlgebra \cite{OHMS17}.

In this case, evaluating $p$ as it is written in equation \eqref{eq:pLongForm} requires $54$ matrix multiplications and $26$ matrix additions. However, using equation \eqref{eq:pShortForm} one needs only $4$ matrix multiplications and $12$ matrix additions, so our complexity is reduced by an order of $10$.

As we illustrate later,  a low rank tensor decomposition
like \eqref{eq:exTrank2} can be computed by standard numerical
software packages such as Tensorlab. Accuracy of the decompositions will be discussed in section \ref{sec:Accuracy}.

\subsubsection{A basic NC Horner method}
\label{sec:NCHorner}

One may also evaluate a NC polynomial using a basic extension of Horner's method to the NC setting. Given a degree $d$ NC polynomial $p(x)$ in $g$ variables, one may first write
\beq
\label{eq:NCHornerStep}
p(x)= c+\sum_{i=1}^g x_i p_i (x)
\eeq
where $c$ is a constant and the degree of $p_i$ is less than $d$ for each $i$. One may then recursively apply this method to each $p_i$ until all polynomials appearing in the summation have degree equal to one. For example, for the polynomial $p(x)$ in equation \eqref{eq:pLongForm}, is equal to
\[
\begin{array}{lclcl}
x_1( x_1 (20 x_1-8x_2+44x_3)+ x_2(50x_1-62x_2+26x_3) + 5x_3(4x_1-7x_3-2x_3))\\
-x_2 ( x_1 (30x_1-46x_2-2x_3)+x_2(75x_1-59x_2+107x_3)+10x_3(3x_1-x_2+7x_3))\\
-x_3 ( x_1 (10x_1-26x_2-22x_3)+x_2(25x_1-9x_2+57x_3)+10x_3(x_1+x_2-5x_3).
\end{array}
\]

Writing $p$ in this form allows $p$ to be evaluated using $12$ matrix multiplications and $26$ matrix additions, thus this method offers a significant improvement over naive evaluation. While this basic Horner method greatly improves on naive evaluation, the linear product sum decomposition for this NC polynomial is still notably more efficient. 

Section \ref{sec:compsave} contains a more detailed comparison of the computational complexity of these three methods for generic homogeneous NC polynomials. We thank a referee for urging us to compare this method to \lps s. Schrempf in \cite{S19} subsequent to this paper introduced an interesting and natural method for evaluation. It heavily uses `linear system realizations', known in the algebra community as `linearization' or the `linearization trick'.

\subsection{Waring decompositions of noncommutative polynomials}
The case where a homogeneous noncommutative polynomial can be expressed as a sum of powers of linear forms adds further advantage for efficient numerical evaluation, as the $d$th power of a matrix can be computed more efficiently than the product of $d$ matrices. This calls for a natural noncommutative generalization of the classical Waring problem. 

The problem is as follows: Given a NC polynomial $p$ in the NC indeterminates $x=(x_1, \dots, x_g)$,  determine if there exist linear functions
\[
L_s(x): = A^\pars_1x_1 + A^\pars_2x_2 + ... + A^\pars_g x_g
\]
such that 
\beq
\label{eq:WarDef}
p(x)= \sum_{s=1}^r (L_s (x))^d
\eeq
where $A_i^\pars \in \R$ or $\C$ for $1=1, \dots, g$. We call a decomposition of the form \eqref{eq:WarDef} a \df{rank $r$ real (resp. complex) Waring decomposition} of $p$. If $r$ is as small as possible then we say $p$ has \df{Waring rank} $r$. 

In the spirit of the NC Waring problem, we also consider the more general problem of determining if a homogeneous NC polynomial of degree $\delta d$ can be decomposed as a sum of $d$th powers of homogeneous degree $\delta$ NC polynomials. That is, supposing $p$ is a homogeneous degree $\delta d$ NC polynomial, we wish to determine if there are homogeneous degree $\delta$ polynomials
\[
G_s (x) = \sum_{|\alpha|=\delta} A_\alpha^\pars x^\alpha
\]
such that
\beq
\label{eq:GenWarDef}
p(x)= \sum_{s=1}^r (G_s (x))^d
\eeq
where each $A_\alpha$ is in $\R$ or $\C$. We call a decomposition of the form \eqref{eq:GenWarDef} a \df{rank $r$ real (resp. complex) $(\delta,d)$-Waring decomposition} of $p$ or sometimes a \df{general Waring decomposition}.

The NC Waring problem reduces to the classical commutative variable Waring problem, thereby effectively solving it over $\C$. In a similar spirit, we reduce the NC general Waring problem to a classical Waring problem, but in more variables, see Section \ref{sec:general}.

\subsection{The NC Waring decomposition}
\ \ \ \
Before stating a result we need a definition.
Define an \df{indicator function} on
 an index $d$-tuple $\alpha=(\alpha_1, \dots, \alpha_d)$ by first defining
\index{$\bmone_j^{\alpha_i}$ }
$$\bmone_j^{\alpha_i}  =
\begin{cases}
1 & \textrm{if }  \alpha_i = j \\
0 & \textrm{if }  \alpha_i \neq j
\end{cases}.
$$
Then the indicator function $\bmone_j^\alpha$ which gives the number of $j$'s appearing in $\alpha$ is
\index{$\bmone_j^{\alpha}$ }
$$
\bmone_j^\alpha := \sum_{i=1}^d \bmone_j^{\alpha_i}.
$$
We caution the reader that the superscript appearing on the indicator function $\bmone_j^\alpha$ is not interpreted as a power.

A corollary for $\del=1$ of Theorem \ref{thm:LinearWaring} is:
\begin{cor}  \label{cor}
Suppose a NC homogeneous polynomial $p(x) = \sum_\alpha {P_\alpha x^\alpha}$, where $P_\alpha = P_{\alpha_1, \alpha_2, ..., \alpha_d}$ $\in \mathbb{C} $,  satisfies $P_\alpha = P_{\talpha}$ for any index sets $\alpha, \talpha$ such that $\bmone_j^\alpha = \mathbbm{1}_j^{\talpha}$  for all $1 \le j \le g$. Then $p$ has a NC complex coefficient \wdec \ with linear powers.
Moreover, for a generic NC homogeneous polynomial,
the number of terms needed is
$$
\left \lceil \frac{\binom{g+d-1}{d}} g  \right \rceil,
$$
except in the cases
\begin{itemize}
     \item $d = 2$, where $g$ terms are needed
     \item $(d,g) = (3,5),(4,3),(4,4),(4,5)$ where $\lceil \frac 1g  {{g+d-1} \choose d} \rceil + 1$ terms are needed.
 \end{itemize}
\end{cor}

\begin{proof}
This corollary is a combination of Theorem \ref{thm:LinearWaring}, the main result in Section \ref{main_result}, and the solutions for the
classical Waring Problem \cite{AH95,OO12}.
\end{proof}

Here the term \df{generic} means that the set of exceptions is contained in a proper closed algebraic variety, i.e., in the zero set of a nontrivial system of polynomial equations.

Each term in a Waring decomposition of a NC polynomial can be evaluated by computing the $d$th power of a matrix rather than computing the product of $d$ different matrices. This gives Waring decompositions an additional computational advantage over \lps \  decompositions when the number of terms needed for each decomposition is the same as one typically expects.

The authors thank Ignat Domanov for discussion related to NC Waring decompositions and efficient polynomial evaluations.

\subsection{Guide to readers}
\ \ \ \
In Section \ref{sec:EvalOfPolys} we examine in more detail the use of \lps \ decompositions to evaluate NC polynomials on matrix variables. We then discus computation of NC Waring and \lps \  decompositions. Additionally we estimate the expected computational savings when evaluating a NC polynomial using one of these decompositions and provide timing comparisons for naive evaluation and evaluation using and \lps \   decompositions.

Section \ref{sec:linear} shows that the NC Waring problem reduces to the classical Waring problem. The section begins by introducing a compatibility condition which is necessary for a NC homogeneous polynomial $p$ to have a Waring decomposition. Theorem \ref{thm:LinearWaring} shows that a NC homogeneous polynomial $p$ has a $t$-term Waring decomposition if and only if it satisfies our compatibility condition and its commutative collapse has a $t$-term Waring decomposition.

Section \ref{sec:general} considers the general NC Waring problem. Similar to the $\delta=1$ case, we begin by introducing a general $\delta$-compatibility condition which is necessary for the existence of a $(\delta,d)$-NC Waring decomposition. Theorem \ref{thm:GenNCWaringMain} shows that, under the $\delta$-compatibility condition, the general NC Waring problem is equivalent to a commutative Waring problem for a polynomial with an increased number of variables. We end with Section \ref{sec:NeedExtraVars} which illustrates that an increase in our number of variables is necessary to reduce the general NC \wdec \ to a commutative \wdec .

\section{Accelerating NC polynomial evaluation using tensor and Waring decompositions}
\label{sec:EvalOfPolys}

In this section we will establish a connection between general tensor decompositions and decompositions of noncommutative polynomials. Using this connection we describe how to use tensor decomposition to efficiently evaluate noncommutative polynomials on matrix variables. Having discussed general tensor decompositions in the introduction, we first consider polynomials with a NC Waring decomposition. Next in Section \ref{sec:compsave} we compare the computational cost of using the various decompositions. Also we discus issues of accuracy.

\subsection{NC Waring decompositions and symmetric tensors}
It is well known that the classical polynomial Waring problem is equivalent to the problem of symmetric tensor decomposition. Let $T \in (\mathbb C^{g})^{\otimes d}$ be a symmetric tensor, i.e. a symmetric multiidexed array, with entries $T_\alpha \in \mathbb C$ where $\alpha= (\alpha_1, \dots, \alpha_d)$ is a $d$-tuple of integers between $1$ and $g$. Here symmetric means that for any permutation $\pi \in \mathcal{S}_d$, we have $T_\alpha=T_{\pi(\alpha)}$ where $\pi(\alpha)=(\alpha_{\pi(1)},\dots, \alpha_{\pi(d)})$. We may associate $T$ to a homogeneous degree d polynomial $p_T (x)$ in the commutative variables $X=(X_1, \dots, X_g)$ by setting
\[
p_T (X)= \sum_{|\alpha|=d} T_\alpha X^\alpha.
\]

Suppose $T$ has rank $r$ symmetric tensor decomposition
\[
T= \sum_{s=1}^r A^\pars \otimes \cdots \otimes A^\pars \quad \quad \mathrm{where \ } d \mathrm{\ copies \ of \ } A^\pars \mathrm{\ appear \ in \ each \ tensor \ product.}
\]
Here $A^\pars=(A_1^\pars, \dots, A_g^\pars)^T \in \mathbb C^{g}$ for each $s$. Then it is straightforward to check that
\[
p_T (X) = \sum_{s=1}^r \left( \sum_{i=1}^g A_i^\pars X_i \right)^d.
\]
That is, a rank $r$ symmetric tensor decomposition of $T$ corresponds to a rank $r$ Waring decomposition for $p_T (z)$. By reversing this correspondence one sees that a rank $r$ Waring decomposition for a homogeneous polynomial gives a rank $r$ symmetric tensor decomposition for the associated symmetric tensor. The fact that the tensor corresponding to a NC polynomial with a Waring decomposition is symmetric is a consequence of Theorem \ref{thm:LinearWaring}.

\subsubsection{Numerical computation of NC Waring decompositions} 
We now give an example which computes an NC Waring decomposition by using popular tensor decomposition software. Consider the homogeneous noncommutative polynomial
\[
\begin{array}{rclcl}
p(x)&=& x_1^3-4 x_2^3 -4 x_3^3 + 5 x_1 x_1 x_2+ 5 x_1 x_2 x_1 + 5 x_2 x_1 x_1 \\
& &-3 x_1 x_1 x_3-3 x_1 x_3 x_1-3 x_3 x_1 x_1 +7 x_2 x_2 x_1 + 7 x_2 x_1 x_2 + 7 x_1 x_2 x_2\\
& &-11 x_2 x_2 x_3 - 11 x_2 x_3 x_2 - 11 x_3 x_2 x_2 + 6 x_3 x_3 x_1 + 6 x_3 x_1 x_3 +6 x_1 x_3 x_3\\
& & - 6 x_3 x_3 x_2 -6 x_3 x_2 x_3-6 x_2 x_3 x_3 + x_1 x_2 x_3 +x_1 x_3 x_2 + x_2 x_1 x_3 \\
& &+x_2 x_3 x_1+ x_3 x_1 x_2+x_3 x_2 x_1.
\end{array}
\]
We associate $p(x)$ to the symmetric tensor $T$ defined by its frontal slices
\[
T(:,:,1)=\begin{pmatrix}
1 & 5 & -3 \\
5 & 7 & 1 \\
-3 & 1 & 6
\end{pmatrix}
\quad \quad
\mathrm{and}
\quad \quad
T(:,:,2)=\begin{pmatrix}
5 & 7 & 1 \\
7 & -4 & -11 \\
1 & -11 & -6
\end{pmatrix}
\]
and
\[
T(:,:,3)=\begin{pmatrix}
-3 & 1 & 6 \\
1 & -11 & -6 \\
6 & -6 & -4
\end{pmatrix},
\]

Using Tensorlab\footnote{A matlab script which computes this decomposition using Tensorlab is avaliable on GitHub at https://github.com/NCAlgebra/UserNCNotebooks.} \cite{VDSBL16}  we compute that $T$ is a rank $4$ tensor and has symmetric tensor decomposition
\[
T= v_1 \otimes v_1 \otimes v_1 +v_2 \otimes v_2 \otimes v_2 +v_3 \otimes v_3 \otimes v_3 + v_4 \otimes v_4 \otimes v_4
\]
where
\[
v_1 \approx \begin{pmatrix}
-0.081 \\
0.409 \\
1.890
\end{pmatrix}
\quad \quad
v_2\approx \begin{pmatrix}
3.165 \\
-3.910 \\
-3.654
\end{pmatrix}
\]
and
\[
v_3 \approx \begin{pmatrix}
-3.273 \\
3.727 \\
3.397
\end{pmatrix}
\quad \quad
v_4 \approx \begin{pmatrix}
1.636 \\
1.581 \\
-1.051
\end{pmatrix}.
\]
It follows that $p$ has the rank 4 NC Waring decomposition
\beq
\label{eq:pNCWaringDecompExample}
\begin{array}{rclcl}
p(x)&\approx& (-0.081 x_1 +0.409 x_2 +  1.890 x_3)^3 \\
& & + (3.165 x_1 -3.910 x_2 -3.654 x_3)^3 \\
& & + (-3.273 x_1 + 3.727 x_2 + 3.397 x_3)^3 \\
& & + (1.636 x_1 + 1.581 x_2 -1.051 x_3)^3.
\end{array}
\eeq
This is easy to numerically verify using NCAlgebra \cite{OHMS17}. 

A naive evaluation of $p$ on a matrix tuple using the original definition of $p$ requires $54$ matrix multiplications. In contrast, evaluating $p$ on a matrix tuple using its NC Waring decomposition only requires $8$ matrix multiplications.

\subsection{Computational savings}
\label{sec:compsave}
 \ \ \ \  We now examine the computational costs for each of the the Waring, \lps, and basic Horner  methods for NC polynomial evaluation.
 
\subsubsection{Linear product sum} 
\ \ \ \ 
The maximum rank of a tensor $T \in (\mathbb C^g)^{\otimes d}$ is not known, however it is conjectured \cite{AOP09} that the rank of a generic tensor $T \in (\mathbb C^g)^{\otimes d}$ is equal to
\beq
\label{eq:TensorGenericRank}
\left\lceil \frac{g^d}{dg-d+1} \right\rceil \approx
\frac{g^{d}}{d(g-1)}
\eeq
except in a small number of defective spaces where most commonly one additional term is needed.

Each term in the \lps \  decomposition is an NC monomial of degree $d$ and may be evaluated in $(d-1)$ multiplications. Therefore, if this conjecture holds, then it follows that generic homogeneous noncommutative polynomials of degree $d$ in $g$ variables may be evaluated using approximately
\beq
\label{eq:LPScompcost}
\frac{(d-1)g^d }{d (g-1)}
\eeq
matrix multiplications.

\subsubsection{Waring}
\ \ \ \
We now consider the case where $p$ has a $t$-term degree $d$ NC Waring decomposition as in equation \eqref{eq:WarDef}. In this case, for any matrix tuple $X$ we may evaluate $p(X)$ using $tg-1$ matrix additions and $t$ matrix exponentiations of degree $d$, where for generic NC polynomials $t \leq \lceil \frac 1g  {{g+d-1} \choose d} \rceil + 1$ by Corollary \ref{cor}. We note that powers of a matrix may be efficiently computed either by decomposing the exponent as a sum of powers of two, or by first computing the Jordan form of the matrix.

Using repeated squaring methods, a matrix exponentiation of degree $d$ can be evaluated with at most $2 \lfloor \log_2 d \rfloor$ matrix multiplications. In addition, using Stirling’s approximation one can show that
\[
\left\lceil \frac 1g  {{g+d-1} \choose d} \right\rceil + 1 \lessapprox \frac{1}{g} \left( \frac{e(g+d)}{d} \right)^d.
\]
It follows that if an NC polynomial $p$ has an NC Waring decomposition, then one may evaluate $p$ on matrix variables using approximately
\[
\frac{2 \lfloor \log_2 d \rfloor}{g} \left( \frac{e(g+d)}{d} \right)^d.
\] 
matrix multiplications. Here $e \approx 2.718$. 

\subsubsection{Horner's method}

Using equation \eqref{eq:NCHornerStep}, one sees that if $h(g,d-1)$ denotes the number of matrix multiplications needed to evaluate a degree $d-1$ NC polynomial in $g$ variables using this basic Horner method, then
\[
h(g,d) \leq g(h(g,d-1)+1).
\]
Using $h(g,1)=0$, one then has
\[
h(g,d) \leq \sum_{\ell=1}^{d-1} g^\ell,
\]
with equality for generic NC polynomials.

\subsubsection{The $d=2$ case}

In the case that $p$ is a homogeneous NC polynomial of degree $2$ in $g$ variables, the tensor $T_p$ corresponding to $p$ is in fact a $g \times g$ matrix. It follows that $p$ has \lps \ rank less than or equal to $g$, hence $p$ may be evaluated using at most $g$ matrix multiplications. Horner's method also generically requires $g$ matrix multiplications in the $d=2$ case, while naive evaluation generically requires $g^2$ matrix multiplications.

\subsection{Comparison of computational costs}
\label{sec:compcompare}
 \ \ \ \  In this subsection we compare computational costs for the various methods.

\subsubsection{Comparison of efficiency: Linear product sum vs. Horner}
\ \ \ \ 
We now briefly compare the various methods. Supposing that the generic rank of a tensor $T \in (\C^g)^{\otimes d}$ is in fact given by  equation \eqref{eq:TensorGenericRank} and using the approximation in equation \eqref{eq:LPScompcost}, one finds that for generic homogeneous NC polynomials, the basic Horner method requires approximately
\[
\frac{g^d-g}{g-1} \  -\    (d-1)\left\lceil \frac{g^d}{dg-d+1} \right\rceil \  \approx \ \frac{g^d-dg}{d(g-1)}
\]
more matrix multiplications to evaluate a NC polynomial than the \lps \ method. The above shows that evaluation with linear product sum is more efficient for generic homogeneous NC polynomials than evaluation with Horner for all $(g,d)$ provided $g,d \geq 3$. This increased efficiency leads to a notable improvement for NC polynomials requiring millions of evaluations, see Table \ref{tab:timing}.

While the more practical point is that \lps \ is more efficient than Horner for all fixed $(g,d)$, it gives perspective to look at extremes of ratios. The asymptotic ratio of the number of matrix multiplications needed by \lps \ to that of Horner approaches $1$ as $d$ tends to infinity. Thus, for high degree homogeneous NC polynomials requiring smaller numbers of evaluations, Horner is likely more appropriate due to the computational cost associated with computing a \lps \ decomposition. In contrast, for fixed $d$ this ratio approaches $(d-1)/d$ as $g$ tends to infinity.

\subsubsection{Comparison of efficiency: Waring vs. \lps}
The main advantage of a Waring decomposition  compared to \lps  \  is that a $d$th power of a linear form may be evaluated (by repeated squaring) using no more than  $ 2 \lfloor \log_2 d \rfloor$ matrix multiplications. In contrast, the product of $d$ distinct linear forms naively requires $d-1$ matrix multiplications to evaluate.

The rank of a tensor is necessarily less than or equal to the symmetric rank of a tensor. It follows that if $p$ has a Waring decomposition, hence the corresponding tensor $T$ is symmetric, then the ratio of the number of matrix multiplications needed by the Waring method and the \lps \  method is bounded below by
\beq
\label{eq:WaringVWaringLike}
\frac{ 2 \lfloor \log_2 d \rfloor}{(d-1)}
\eeq
with equality if the rank of $T$ is equal to the symmetric rank of $T$. 

An example of a tensor whose rank is strictly less than its symmetric rank has only recently been produced \cite{S18}. The example is of a symmetric tensor of size $800 \times  800 \times 800$ with rank 903 and symmetric rank greater than 903. The corresponding NC polynomial $p$ is a homogeneous degree $d=3$ polynomial in $g=800$ variables. 

Since the degree of $p$ is $3$, in both the Waring method and the \lps \  method, each monomial requires $2$ matrix multiplications to evaluate. As a consequence, in this example, using the \lps \  decomposition allows for $p$ to be evaluated in strictly fewer matrix multiplications than the Waring decomposition.

Although an example of a tensor with rank less than symmetric rank is known, there are various results showing that rank is equal to symmetric  rank for generic tensors having small rank, e.g. see \cite{COV17,F16}. Additionally, we note that it remains unknown if generic symmetric tensors have rank equal to symmetric rank. 

In the case $d=3$, there is no advantage of using a Waring decomposition over a \lps \ decomposition in terms of number of multiplications required for evaluation. However, we expect that as $d$ grows large, even if there is a gap between the Waring rank and \lps \ rank of a given NC polynomial, a NC Waring decomposition will outperform a \lps\  decomposition in terms of efficiency due to the ability to efficiently evaluate matrix powers.

It is also worth pointing out that the generic symmetric rank for symmetric tensors in $(\C^g)^{\otimes d}$ is strictly less than the generic rank for arbitrary tensors in $(\C^g)^{\otimes d}$ provided $g,d \geq 3$, with the gap becoming increasingly significant as $g$ and $d$ grow. In contrast, Horner's method sees no notable improvement when used on NC polynomials which have a Waring decomposition. Thus both Waring and \lps \ decompositions significantly outperform Horner's method in this setting.

\sssec{Comparison to naive}
All three methods offer a serious improvement over naive evaluation. Since a naive evaluation of a single degree $d$ NC monomial requires $d-1$ matrix multiplications, the naive approach to evaluating a NC polynomial on a matrix tuple generically requires 
\[
g^d (d-1)
\]
 matrix multiplications. It follows that for a NC polynomial with \lps \ rank given by equation \eqref{eq:TensorGenericRank}, the ratio of the number of matrix multiplications used in the \lps \  method to those in the naive method is approximately
\beq
\label{eq:WaringLikeVsNaive}
\frac{(d-1)g^d }{d (g-1)(d-1)g^d}=\frac{1}{d(g-1)}.
\eeq

Similarly, for NC polynomials with a Waring decomposition, the ratio of the number of matrix multiplications used in the Waring method to those in the naive method is then approximately bounded above by
\[
\frac{2 \lfloor \log_2 d \rfloor }{g (d-1)} \left( \frac{e(g+d)}{gd} \right)^d,
\]
a quantity that rapidly approaches zero as $g$ or $d$ increase, provided $3 \leq d,g$.

\subsubsection{Accuracy of computations} 
\label{sec:Accuracy}
\ \ \ \ 
The tensor in Example \ref{eq:pLongForm} has a unique rank $2$ decomposition (up to scaling) which can be shown using Kruskal's condition for uniqueness of tensor decompositions \cite{K77}. Indeed, when the example is treated with Tensorlab a rank $2$ decomposition which is the same (up to scaling) as the decomposition in \eqref{eq:exTrank2} is produced.

For display purposes in equation \eqref{eq:pNCWaringDecompExample} and above we have truncated the coefficients in the decompositions for $T$ and $p$ at the thousandths place which leads to a small round off error. If we use the long form coefficients computed by Tensorlab, then the decomposition for $T$ and $p$ is highly accurate. Note that $T$ has infinitely many rank $4$ tensor decompositions. The computed tensor decomposition depends on the initialization of the algorithm used in the computation.

Although highly accurate decompositions can be computed for small tensors, when working with large tensors of generic rank, one should not expect to exactly compute a tensor decomposition. However, in early steps of noncommutative optimization algorithms, a small amount of error in the computed descent directions is unlikely to cause serious difficulty. Exact evaluations may be used in later steps when near an optimum. Amounts of relative error averaged over our experiments in tensor decompositions for tensors of the various selected $g$ and $d$ are reported in Table \ref{tab:timing}.

\subsubsection{Experiment comparing run times of \lps \ to other methods}

We now give a brief illustration of experimental timing where we for evaluating homogeneous NC polynomials on $20 \times 20$ and $100 \times 100$ matrices using \lps s , Horner, and naive evaluation.

Table \ref{tab:timing} selects several values of $g$ and $d$, in column $1$, and presents properties of the tensor decomposition in the space $(\C^g)^{\otimes d}$ in the last $3$ columns: generic tensor rank, time to find a decomposition, and accuracy of the decomposition. This is the tensor decomposition used for the \lps \ method. 

Columns $2$ and $3$ list how many polynomial evaluations are needed for \lps \ to overcome its tensor decomposition cost, and hence to outperform Horner's method\footnote{The cost of computing a Horner decomposition is assumed to be negligible in this comparison.}. Similarly, columns $4$ and $5$ show when \lps \ breaks even with the naive method\footnote{The estimates are generated as follows: We randomly generate $1000$ pairs of $n \times n$ matrices and compute the average amount of time needed for a single multiplication of a pair $n \times n$ matrices. The number of matrix multiplications needed for a generic rank \lps \  evaluation or a naive evaluation is multiplied by the average amount of time needed for a single matrix multiplication to compute the expected time needed for a single evaluation on $n \times n$ matrices. Using this methodology the average time needed for multiplication of a pair of $20 \times 20$ matrices or $100 \times 100$ matrices was found to be $1.4056*10^{-6}$ seconds or $2.9392* 10^{-5} $ seconds, respectively.}.

\begin{center}
\begin{table}
 \begin{tabular}{ | c | c | c | c | c | c | c | c |} 
 \hline
  & \multicolumn{4}{c|}{no. of  evals. for LPS to   break even vs.} & generic  & \multicolumn{2}{c|}{tensor decomp.}  \\ 
 \cline{2-5} \cline{7-8}
 (g,d) & \multicolumn{2}{c|}{Horner} & \multicolumn{2}{c|}{naive}  & tensor & time & rel. \\
  \cline{2-5}
  & $20 \times 20$ & $100 \times 100$ & $20 \times 20$ & $100 \times 100$ & rank & (s) & error\\
 \hline
 (3,3)\footnotemark & 9,000 &  430  & 409 & 20 & 5 & 0.025 & $1*10^{-14}$ \\ 
 \hline
 (4,4) & 54,751 & 2,618 & 1,856 & 89 & 20 & 1.85 & $8 *10^{-4}$ \\
 \hline
  (8,4) & 139,136 &  6,654 & 1,853 & 89 & 142 &  30.9 & $7 *10^{-4}$ \\
 \hline
 (5,5) & 290,994 & 13,916 & 4,498  &   215  & 149 & 75.3 & $1 *10^{-3}$ \\ 
 \hline
  (3,6) & 171,110 &  8,183  & 3,972 &  190 &  57 & 18.8 & $3 *10^{-4}$  \\
  \hline
\end{tabular}
\caption{ Break even points for evaluation of a homogeneous NC polynomial using LPS to be more efficient than Horner's method or naive evaluation.}
\label{tab:timing}
\end{table}
\end{center}

\footnotetext{The space $(\mathbb C^3)^{\otimes 3}$ is defective and the generic rank for tensors of this size is $5$ rather than the expected $4$.}

In the case that a NC polynomial has low Waring or \lps \  rank, evaluation using these methods will be much more efficient. Also, the tensor decomposition needed to compute the NC polynomial decomposition takes significantly less time to compute and the error in the decomposition will be significantly lower.

\section{The  noncommutative Waring problem} \label{sec:linear}
\ \ \ \

In this section we examine when a noncommutative polynomial has a NC Waring decomposition. Two approaches are considered. First we consider a noncommutative algebra approach. In this approach, we show that if a noncommutative polynomial $p$ has a Waring decomposition, then its coefficients must satisfy a compatibility condition.
If this condition is satisfied, then we prove that $p$ has a $t$-term Waring decomposition if and only if the restriction of $p$ to commuting variables has a classical $t$-term Waring decomposition. 

The second approach makes use of identification of noncommutative polynomials and tensors and known results for tensor decompositions. To an expert in both tensor theory and in NC polynomials the use of this approach and results on NC Waring decompositions may not come as a surprise. However, for our (main) NC polynomial audience we include a self contained NC polynomial proof. 

Before proceeding with proofs we briefly discuss the history of the polynomial Waring problem.

\ssec{History of the Waring decomposition}
The polynomial Waring problem concerns the question whether a given polynomial, $f(x_1, x_2, \dots, x_n)$, can be represented by sums of powers of polynomials, where $x_i$'s are variables which commute.  In this form, the Waring problem is closely related to symmetric tensor decomposition. The polynomial Waring problem for powers of linear forms
was treated successfully in \cite{AH95} and subsequently in \cite{RS00} and \cite{FOS12} and
has been studied extensively, as is shown, for example, in \cite{BC13} and \cite{GV08}.

\subsection{A basic definition}
\ \ \ \
 Noncommutative Waring decompositions are associated with commutative Waring decompositions through a correspondence we now describe.

For a NC polynomial $p$, the associated \df{commutative collapse}, $p_c$, is the commutative polynomial obtained by considering the variables of $p$ to be commutative.
Our notation for commutative collapse for a NC monomial $x^\alpha = x_{\alpha_1} x_{\alpha_2} \dots x_{\alpha_d}$ is $X^\alpha  = X_{\alpha_1} X_{\alpha_2} \dots X_{\alpha_d}$. For example, when $\alpha = (1, 2, 1, 2 )$, $x^\alpha = x_1x_2x_1x_2$ collapses to $X^\alpha = X_1^2X_2^2$.

We impose an equivalence relation $\sim_c$ \index{$\sim_c$} on NC monomials by saying that $x^\alpha$ and $x^{\talpha}$ are \df{commutative equivalent} if they have the same commutative collapse:
$$
x^\alpha \sim_c  x^{\talpha} \quad   \text{iff} \quad
X^\alpha = X^{\talpha}.
$$
Moreover, we say two index tuples $\alpha$ and $\talpha$ are \df{commutative equivalent}, denoted $\alpha \sim_c  {\talpha}$,
iff $x^\alpha \sim_c  x^{\talpha}$.
Note that
$$\alpha \sim_c  {\talpha}\qquad \mbox{ iff} \qquad  \onea_i = \bmone^\talp_i
\ \ \mathrm{for} \ i = 1, \dots, g.$$

\subsection{NC polynomial proof of the NC Waring decomposition}
\ \ \ \
Our presentation contains two parts. First we state a compatibility condition necessary for the existence of a Waring decomposition, \S \ref{sssec:Compatibility}. Second, if the compatibility condition holds,
we reduce the NC Waring problem to the classical commutative
Waring problem, \S \ref{main_result}.

\sssec{The Compatibility Condition}
\label{sssec:Compatibility}
\ \ \ \
As we next  see the following condition is necessary for existence
of a NC \wdec.

\begin{defi}
We say a noncommutative homogeneous degree $d$ polynomial
$$
p(x) = \sum_{|\alpha|=d} {P_\alpha x^\alpha}\qquad P_\alpha := P_{\alpha_1, \alpha_2, ..., \alpha_d} \in \mathbb{R} \ or \ \mathbb{C}
$$
satisfies the \df{compatibility condition} if
\beq
\label{eq:Paequiv}
P_\alpha = P_{\talpha} \qquad for \ all \
\alpha \sim_c \talpha.
\eeq
Sometimes we say that {$p$ is compatible}.
\qed \end{defi}

We note that a noncommutative homogeneous polynomial $p$ satisfies the compatibility condition if and only if the corresponding tensor described in Section \ref{sec:NCPolyTensor} is symmetric. To see this, given a tuple $\alpha=(\alpha_1,\alpha_2, \dots, \alpha_d)$ of length $d$ and a permutation $\pi \in \mathcal{S}_d$ define
\[
\pi (\alpha)=(\alpha_{\pi(1)}, \alpha_{\pi(2)}, \dots, \alpha_{\pi(d)}) \quad \mathrm{and} \quad \pi (x^\alpha)=x^{\pi(\alpha)}.
\]
It is straight forward to check that $x^\alpha \sim_c  x^{\talpha}$ and $\alpha \sim_c \talpha$ if and only if there is a permutation $\pi \in \mathcal{S}_d$ such that $\pi(\alpha)=\talpha$.

Extend the action of $\mathcal{S}_d$ to noncommutative homogeneous polynomials of degree $d$ by
\[
\pi (p(x))=  \sum_{|\alpha|=d} {P_{\pi(\alpha)} x^{\alpha}}.
\]
Then $p$ meets the compatibility condition if and only if
\[
\pi(p(x))=p(x)
\]
for all permutations $\pi \in \mathcal{S}_d$. That is, for all $\alpha$ and all $\pi \in \mathcal{S}_d$, we have $P_\alpha=P_{\pi (\alpha)}$. It follows that the corresponding tensor is symmetric.

The following lemma shows that the compatibility condition is necessary for existence of a NC \wdec.

\begin{lem} \label{NecessaryCond}
If a NC homogeneous polynomial of degree $d$
has a  $t$-term NC \wdec,
then the compatibility condition \eqref{eq:Paequiv} holds. Moreover, if $p$ meets the compatibility condition, then $p$ has a $t$-term NC Waring decomposition over the complex numbers (resp. real numbers) if and only if
\begin{equation} \label {Peq}
P_\alpha = \sum_{s=1}^t \prod_{j=1}^g \left(  A_j^\pars \right)^{\bmone_j^\alpha}
\end{equation}
has a solution $A_j^\pars \in \mathbb{C} ( \textrm{resp. } A_j^\pars \in \mathbb{R} ).$
\end{lem}

\begin {proof}
By definition, $p$ has a $t$-term \wdec \ if and only if
\begin{equation*}
\label{eq:pcore}
\sum_{|\alpha|=d} P_\alpha x^\alpha   = \sum_{s=1}^t [L_s(x)]^d
 =  \sum_{s=1}^t   \sum_{|\alpha|=d} \left( \prod_{i=1}^d A_{\alpha_i}^\pars \right) x^\alpha
= \sum_{|\alpha|=d} \left( \sum_{s=1}^t \prod_{i=1}^d A_{\alpha_i}^\pars \right) x^\alpha.
\end{equation*}
Comparing the coefficients of $x^\alpha$ on both sides, we get
\beq
\label{eq:pwkeyG}
P_\alpha  = \sum_{s=1}^t \prod_{i=1}^d A_{\alpha_i}^\pars
= \sum_{s=1}^t \prod_{j=1}^g \left(  A_j^\pars \right)^{\bmone_j^\alpha}.
\eeq
This also implies $P_\alpha = P_{\talpha}$ if $\bmone_j^\alpha
= \mathbbm{1}_j^{\talpha}$ for all $1 \le j \le g $.
\end {proof}

\begin{exa}
\rm A NC homogeneous polynomial $p(x) = \sum_\alpha {P_\alpha x^\alpha}$ has the complex (resp. real) $2$-term \wdec \
$$
p(x) = (a x_1 + c x_2)^3 + (b x_1 + d x_2)^3
$$
if and only if $p$ is compatible and
\begin{equation}
\begin{aligned}
P_{1,1,1}  &
=   a^3  + b^3
\\
P_{1,1,2}  &= a^2 c + b^2 d = \frac 1 6 ((a+c)^3 +(b+d)^3- (a-c)^3-(b-d)^3)- \frac 1 3 P_{2,2,2}
\\
P_{1,2,2}  &=   a c^2 + b d^2 = \frac 1 6 ((a+c)^3 +(b+d)^3+ (a-c)^3+(b-d)^3) - \frac 1 3 P_{1,1,1}
\\
P_{2,2,2}  &= c^3  + d^3
\end{aligned}
\end{equation}
has a solution $a,b,c,d \in \mathbb{C}$ (resp. $\mathbb{R}$).
\qed
\end{exa}

\sssec{Reduction of NC Waring to Classical Waring}
\label{main_result}
\ \ \ \
We see in this section that the NC Waring problem reduces to the commutative one.

\begin{lem}\label{lem:count_eta}
For an index tuple $\alpha$, denote $\eta[\alpha]$ as the number of $\talpha$'s that satisfy $\onea_j = \oneta_j$  for all $1 \le j \le g$. Then
$$
\eta[\alpha] = \frac {d !} {\prod_{j = 1}^{g} (\bmone_j^\alpha) !}.
$$
\end{lem}

\begin{proof}
The problem is equivalent to calculating how many d-tuples can be formed by elements from
$
\alpha = ( \alpha_1, \alpha_2, \dots, \alpha_d )
$, which is equivalent to
$$
\eta[\alpha] = \frac {\text{\# of permutations of }d\text{ items}} {\text{\# of permutations of repetitions}} =\frac {d !} {\prod_{j = 1}^{g} (\bmone_j^\alpha) !}.
$$
\end{proof}

\begin{thm} \label{thm:LinearWaring}
Suppose $p$ is a homogeneous NC polynomial which satisfies the compatibility conditions \eqref{eq:Paequiv}.
Then the commutative collapse $p_c$ has the \wdec \
\begin{equation} \label{eq:comm_WR}
p_c(X)=\sum_{s=1}^t [ A^\pars_1X_1 + A^\pars_2X_2 + \cdots + A^\pars_g X_g]^d
\end{equation}
(with $X_i$ being commuting variables) if and only if $p$ has the
NC \wdec \
\begin{equation} \label{eq:nc_WR}
p(x)=\sum_{s=1}^t [ A^\pars_1x_1 + A^\pars_2x_2 + \cdots +A^\pars_g x_g]^d .
\end{equation}
Note that the number of terms is the same and the
real coefficients
(resp. complex coefficients) $A_j^\pars$  are the same.
\end{thm}

\def\cR{{\mathcal R}}

\begin{proof}
The proof begins by laying out the algebraic connection
between $p$ and $p_c$. Let $\cR$ denote a set consisting of one  representative from each $\sim_c$ equivalence class. Then from \eqref{eq:Paequiv}, the NC polynomial
$
p(x) = \sum_{|\alpha| = d}  P_\alpha x^\alpha
$
has
commutative collapse satisfying
$$
p_c(X)
 =
 \sum_{\alpha \in \cR  }
\quad  \sum_{ \talpha \sim_c  \alpha } P_{\talpha} X^{\alpha}
 =
 \sum_{\alpha \in \cR }
 P_{c,\alpha} \  X^{\alpha},
$$
where
$
P_{c,\alpha} =
 \sum_{ \talpha \sim_c  \alpha } P_{\talpha}
$.

Thus if $p$ satisfies the compatibility condition
 \eqref{eq:Paequiv}, then
\begin{equation}
 \label{ncToc}
P_{c,\alpha} =  \eta[\alpha ]    P_\talp \qquad \mathrm{for} \ \alp \in \cR
\ \ \mathrm{and} \ \
\alp \sim_c \talp.
\end{equation}
Therefore, $p_c$ is the commutative collapse of a
compatible NC homogeneous degree $d$ polynomial $p$
 iff $P_{c,\alpha} =  \eta[\alpha ]    P_\alpha$ for all index tuples $\alpha \in \cR$ of length $d$.

Now we proceed to prove our theorem. Assume $p$ has the NC \wdec \ \eqref{eq:nc_WR}, we shall obtain a reversible formula for the \wdec \ of $p_c$.
By equation \eqref{ncToc} and Lemma \ref{NecessaryCond}, the commutative collapse $p_c$ is
\begin{equation} \label{eq:collapse}
p_c(X) =
 \sum_{\alpha \in \cR, \  | \alpha | = d  }\eta[\alpha ]    P_\alpha X^\alpha  = \sum_{|\alpha|=d} P_\alpha X^\alpha  = \sum_{|\alpha|=d} \sum_{s=1}^t \prod_{j=1}^g \left( A_j^\pars \right)^{\bmone_j^\alpha} X^\alpha .
\end{equation}
Thus
\begin{equation} \label{eq:collapse2}
p_c(X)
  = \sum_{s=1}^t \sum_{|\alpha|=d} \prod_{i=1}^d A^\pars_{\alpha_i} X^\alpha
  = \sum_{s=1}^t [ A^\pars_1X_1 + A^\pars_2X_2 + ... A^\pars_g X_g]^d .
\end{equation}

On the other hand, suppose $p$'s commutative collapse, $p_c$, has the commutative \wdec \ \eqref{eq:comm_WR}, then
the calculations in \eqref{eq:collapse} and \eqref{eq:collapse2} can be reversed. By comparing coefficients, this is equivalent to
$$
P_{c,\alpha} = \eta[\alpha ]
 \sum_{s=1}^t \prod_{j=1}^g \left( A_j^\pars \right)^{\bmone_j^\alpha}
$$
for all $\alpha \in \cR$.
Therefore by \eqref{ncToc},
$p$ satisfies
 $$
P_\alpha =\sum_{s=1}^t \prod_{j=1}^g \left( A_j^\pars \right)^{\bmone_j^\alpha}
$$
for all index tuples $\alpha$ of length $d$. Hence by Lemma \ref{NecessaryCond}, $p$ has the \wdec \ \eqref{eq:nc_WR}.
Thus under the compatibility condition \eqref{eq:Paequiv}, the NC polynomial $p$ has a \wdec \ iff its commutative collapse $p_c$ has the same \wdec.
\end{proof}

\subsection{NC Waring decompositions and symmetric tensors}

A tensor based approach to the noncommutative Waring problem that can be used to prove Theorem \ref{thm:LinearWaring} is as follows. By considering the correspondence of NC polynomials and tensors described in Section \ref{sec:EvalOfPolys} as well as the relationship between NC polynomial decompositions and tensor decompositions, one sees that a NC polynomial has a NC Waring decomposition if and only if the corresponding tensor has a symmetric tensor decomposition. 

It is well known that a tensor has a symmetric tensor decomposition if and only if the tensor itself is symmetric, e.g. see \cite[Lemma 4.2]{CGLM08} . Therefore, a NC polynomial $p$ has a NC Waring decomposition if and only if the corresponding tensor $T_p$ is symmetric. One may check that the tensor $T_p$ is symmetric if and only if $p$ satisfies the compatibility condition.

\section {The general noncommutative Waring problem} \label{sec:general}
\ \ \ \
 We now consider a more general situation of which the problem in the preceding section is the base case. As you will see, the bookkeeping and notation is formidable, so it is very helpful to have done a simpler case. In the previous section our focus was to determine if a degree $d$ \nc \  homogeneous polynomial can be expressed as sums of powers of linear terms. Now we examine when a degree $\del d$ \nc \  homogeneous polynomial can be expressed as sums of powers of homogeneous degree $\del$ terms.
 
As in the last section, we consider both noncommutative algebra and (for the tensor proficient) tensor based approaches.

\subsection{Classical General Waring Problem.}
\ \ \ \
The classical commutative Waring problem can be generalized from representation by sums of powers of linear functions to representation by sums of powers of homogeneous polynomials. The generalized classical Waring  problem has also been well studied.
According to Theorem 4 in \cite{FOS12},
there is an upper bound for the number of terms needed for
such  problems:

\begin{thm}
A general homogeneous polynomial of degree $\delta d$ in $g$ variables, where $d \ge 2$, can be expressed as a sum of at most
$d^{g - 1}$ $d^{th}$ powers of degree $\delta$ homogeneous
complex coefficient polynomials. Moreover, for a fixed $g$, this bound is sharp for all sufficiently large $\delta$.
\end{thm}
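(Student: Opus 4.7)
The plan is to split the statement in two and use different tools on each half: a parameter count gives sharpness, while the existence of a decomposition with only $d^{g-1}$ summands rests on a nontrivial Hilbert-function input from commutative algebra.

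Let $N(m):=\binom{m+g-1}{g-1}$ denote the dimension of the space of degree-$m$ forms in $g$ variables, and consider the image
\[
\Sigma_t \;=\; \Bigl\{\,\textstyle\sum_{s=1}^{t} q_s^d \,:\, q_s \in \mathbb{C}[x_1,\ldots,x_g]_\delta\,\Bigr\} \;\subseteq\; \mathbb{C}[x_1,\ldots,x_g]_{\delta d}.
\]
For sharpness, the parameter source of $\Sigma_t$ has dimension $tN(\delta)$ while its ambient target has dimension $N(\delta d)$, so a general form of degree $\delta d$ lies in $\Sigma_t$ only if $tN(\delta)\ge N(\delta d)$. Both quantities are polynomials in $\delta$ of degree $g-1$ with leading coefficient $1/(g-1)!$, hence $N(\delta d)/N(\delta)\to d^{g-1}$ as $\delta\to\infty$, forcing $t\ge d^{g-1}$ for fixed $g$ and large $\delta$.

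For the upper bound I would fix $t=d^{g-1}$ and choose generic forms $q_1,\ldots,q_t\in\mathbb{C}[x]_\delta$. Differentiating the power map at $(q_1,\ldots,q_t)$, Terracini's lemma identifies the tangent space to $\Sigma_t$ with the image of
\[
(h_1,\ldots,h_t)\;\longmapsto\; d\sum_{s=1}^{t} h_s\,q_s^{d-1},\qquad h_s\in\mathbb{C}[x]_\delta.
\]
Surjectivity of this map onto $\mathbb{C}[x]_{\delta d}$ would imply that $\Sigma_t$ is Zariski-dense in the ambient space, which is exactly what the theorem asserts. But surjectivity amounts to the claim that the ideal $(q_1^{d-1},\ldots,q_t^{d-1})$ generated by $d^{g-1}$ generic $(d-1)$-th powers has the expected Hilbert function in degree $\delta d$.

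The real obstacle is this last Hilbert-function estimate. It is a special case of the Fr\"oberg--Iarrobino conjecture on ideals of generic powers, still open in full generality but known in the asymptotic regime $\delta\to\infty$ used here, and is the technical ingredient on which \cite{FOS12} is built. Once granted, the tangent-space calculation and the binomial asymptotics above finish the proof, so the genuine content of the theorem is concentrated in this single Hilbert-series estimate rather than in the secant-variety framework surrounding it.
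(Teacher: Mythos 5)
The paper does not prove this statement; it quotes it verbatim as Theorem~4 of \cite{FOS12} and uses it only as known background, so there is no internal proof to compare against. What follows is therefore a review of your reconstruction of the cited result's proof.

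Your sharpness argument is sound and is the standard parameter count: $\binom{\delta d+g-1}{g-1}$ and $\binom{\delta+g-1}{g-1}$ are both polynomials of degree $g-1$ in $\delta$ with leading coefficient $1/(g-1)!$, so their ratio tends to $d^{g-1}$, and any $t<d^{g-1}$ eventually violates $t\binom{\delta+g-1}{g-1}\ge\binom{\delta d+g-1}{g-1}$. The Terracini reduction for the upper bound is also the right framework: the tangent space at $\sum_s q_s^d$ is the degree-$\delta d$ slice of the ideal $(q_1^{d-1},\ldots,q_t^{d-1})$, and you want this slice to be all of $\mathbb{C}[x]_{\delta d}$; you could also note the elementary check that the required inequality $d^{g-1}\binom{\delta+g-1}{g-1}\ge\binom{\delta d+g-1}{g-1}$ holds for every $\delta\ge1$ once $d\ge2$, which is what makes the expected-dimension count consistent.

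The gap is in how you dispatch the surjectivity. You declare it ``a special case of the Fr\"oberg--Iarrobino conjecture\ldots\ still open in full generality but known in the asymptotic regime $\delta\to\infty$.'' That mislocates the content in two ways. First, the upper bound in the theorem holds for \emph{every} $\delta\ge1$, not just asymptotically; the asymptotic regime is relevant only to the sharpness half, and you have silently imported it into the existence half. Second, \cite{FOS12} does not rest on an open conjecture. By upper-semicontinuity of the rank of the differential, it suffices to exhibit a single $t$-tuple $(q_1,\ldots,q_t)$ for which the multiplication map is onto $\mathbb{C}[x]_{\delta d}$, and Fr\"oberg--Ottaviani--Shapiro do so concretely, specializing the $q_s$ to $\delta$-th powers of explicitly chosen linear forms and verifying the needed containment directly (via an apolarity/Macaulay-dual computation with those specific forms). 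That explicit verification is the actual mathematical content of the cited theorem; deferring it to a conjecture that the original authors neither invoke nor need leaves your upper-bound argument incomplete.
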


\subsection{Problem formulation and notation}
\ \ \ \
Let \index{$S^g_\delta$} $S^g_\delta$ be the set of all possible $\delta$-tuples whose elements are integers between $1$ and $g$, i.e.,
$$
S^g_\delta = \{ (\alpha^{(1)}, \alpha^{(2)}, \dots, \alpha^{(\delta)}) \mid 1 \le \alpha^{(i)} \le g \}.
$$
Additionally, define \index{$( S^g_\delta )^d $} $( S^g_\delta )^d $ by
$$
( S^g_\delta )^d = \{ (\alpha_1, \alpha_2, \dots, \alpha_d) \mid \alpha_i \in S^g_\delta \}.
$$
That is, $( S^g_\delta )^d$ is the set of $d$-tuples of $\delta$ tuples of indices.
For any $\alpha = (\alpha_1, \dots, \alpha_d) \in (S^g_\delta)^d$, where $\alpha_i = (\alpha_i^{(1)}, \dots, \alpha_i^{(\del)}) \in S^g_\delta, $
we can write
\[
x^{\alpha} = x^{\alpha_1}x^{\alpha_2} \dots x^{\alpha_d}.
\]
That is, $x^\alpha$ is the monomial
\[
x_{\alpha_1^{(1)}} x_{\alpha_1^{(2)}} \dots x_{\alpha_1^{(\delta)}}
\dots x_{\alpha_d^{(1)}}
\dots x_{\alpha_d^{(\delta)}}.
\]

Recall our notation for a degree $\delta$ homogeneous polynomial
$$
H(x) = \sum_{\beta \in S^g_\delta} A_{\beta} x^\beta,
$$
where $A_{\beta} = A_{(\beta^{(1)}, \beta^{(2)}, \dots, \beta^{(\delta)})} \in \mathbb{C}$.

\begin{rmk}
\rm For any $ \alpha = (\alpha_1, \alpha_2, \dots, \alpha_d) \in (S^g_\delta)^d$,  we can identify
$$\alpha = ((\alpha_1^{(1)}, \alpha_1^{(2)}, \dots, \alpha_1^{(\delta)}), \dots, (\alpha_d^{(1)}, \alpha_d^{(2)}, \dots, \alpha_d^{(\delta)}))$$
with
$$
 (\alpha_1^{(1)}, \alpha_1^{(2)}, \dots, \alpha_1^{(\delta)}, \dots, \alpha_d^{(\delta)}) \in S^g_{\delta d}.$$ On the other hand, for any element of $S^g_{\delta d}$, we can reverse this identification and form groups of size $\delta$ to get a $d$-tuple of $\delta$-tuples.
We let $\tau$ \index{$\tau$} denote the bijection
 $$\tau : S^g_{\delta d}  \to (S^g_\delta)^d$$
 which accomplishes this grouping. 
\qed
\end{rmk}

\bs

\noindent
{\bf The} \df{General NC Waring Problem:}

 {\bf
\ \ \ \ Given a NC homogeneous degree $\delta d$ polynomial p, does it have a t-term $d^{th}$ power real NC Waring (resp. complex NC Waring)
decomposition of  degree $\delta$. That is, can $p(x)$ be written as
\beq
\label{eq:deldWaring}
p(x) = \sum_{s = 1}^t (H_s(x))^d = \sum_{s = 1}^t \left ( \sum_{\beta \in S^g_\delta} A^\pars_{\beta} x^{\beta} \right )^d?
\eeq}

\bs

We call this problem the $(\del,d)$-NC Waring problem and say a decomposition of the form \eqref{eq:deldWaring} is a \df{$t$-term $(\del,d)$-NC Waring decomposition}. Similarly for a commutative polynomial $p_c$, we say a decomposition of the form \eqref{eq:deldWaring} (with $x^\beta$ replaced by $X^\beta$) is a \df{$t$-term $(\del,d)$-Waring decomposition}. Note that the problem treated in Section \ref{sec:linear} is exactly the $(1,d)$-NC Waring problem.

An obvious fact is, if $p$ is a degree $\delta d$ NC homogeneous polynomial and $p$ has a $t$-term $(\delta,d)$-NC \wdec ,
then its commutative collapse $p_c$ has a $t$-term $(\delta,d)$-\wdec . For a conjecture on the generic value of $t$ in this commutative case, see \cite[Conjecture 1.2]{LORS19}.

\sssec{Tuple indicator functions}
\ \ \ \
We now extend the notion of indicator function to tuples of $\del$-tuples.
For two $\delta-$tuples $\beta,\gamma \in S^g_{\delta}$, denote
\index{$\bmone_\beta^{\gamma}$ }
$$
\bmone_\beta^{\gamma} =
\begin{cases}
1 & \textrm{if }  \gamma = \beta \\
0 & \textrm{otherwise},
\end{cases}.
$$
Then for an index tuple $\mu \in (S^g_\delta)^d$, the number of times a particular $\delta-$tuple $\beta \in S^g_\delta$ appears in $\mu$ is
$$
\bmone_\beta^\mu := \sum_{k=1}^d \bmone_\beta^{\mu_k}.
$$
Furthermore, denote

\begin{equation}\label{eq:1abi}
\bmone_i^\mu
:= \sum_{\beta \in S^g_\delta, i \in \beta}
\bmone^\mu_\beta
= \sum_{\beta \in S^g_\delta} \bmone^\mu_\beta \oneb_i
\end{equation}
as the number of integers $i$ appearing in all the $\delta$-tuples in $\alpha$.

\subsection{Main results on the general Waring decomposition}
\ \ \ \
Similar to Section 2, we first state a compatibility
condition which is necessary for the existence of a generalized NC \wdec . We then prove that, if this condition holds, then we can reduce the generalized NC Waring problem to a commutative one at the price of increasing our number of variables.

\subsubsection{The Compatibility Condition}
\ \ \ \
The generalized version of the $\delta = 1$ compatibility condition is defined as follows:

\begin{defi}
We say a noncommutative homogeneous polynomial of degree $\delta d$ in g variables of the form

\begin{equation} \label{eq:ncPoly}
p(x) = \sum_{\alpha \in \tgdd} {P_\alpha x^\alpha} \qquad P_\alpha
 \in \mathbb{R} \ \mathrm{or} \ \mathbb{C}
\end{equation}
satisfies the \df {$\delta$-compatibility condition} if
\begin{equation} \label{eq:GeneralPaEquiv}
P_{\alpha} = P_{\talpha}
\end{equation}
for all index sets, $\alpha$, $\talpha \in \tgdd$
such that $\onetau_\beta = \onetaut_\beta \text{ for all }
\beta \in S^g_\delta $.
Consistent with this,
we define the \df{$\delta$-equivalence relation}, denoted $\sim_\delta$ \index{$\sim_\delta$}, on $\tgdd$ by

$$
\alpha \sim_{\delta} \tilde{\alpha}
\quad
\text{iff}
\quad
\onetau_\beta  = \onetaut_\beta
$$
for all $\beta \in S^g_\delta$.
\qed \end{defi}

\begin{rmk}
\label{rem:tupequiv}
\rm Here are a few bookkeeping properties of $\delta$-equivalences.

\ben

\item
\label{it:1EquivIsCommEquiv}
We have $\alpha \sim_1 \talpha$ if and only if $\alpha \sim_c \talpha$.

\item
\label{it:monEquivCompare}
Let $\delta_1,\delta_2 \in \mathbb{N}$ and let $\alpha,\talpha \in S_{\delta_2 d}^g$. If $\delta_2$ divides $\delta_1$, then $\alpha \sim_{\delta_1} \talpha$ implies $\alpha \sim_{\delta_2} \talpha$. In the case where $\delta_2=1$ this follows from equation \eqref{eq:1abi}. The general case is similar.

\item
\label{it:polyEquivCompare}
Let $\delta_1, \delta_2, d \in \mathbb{N}$ and let $p$ be a degree $\delta_1 d$ NC homogeneous polynomial. If $\delta_2$ divides $\delta_1$ and $p$ satisfies the $\delta_2$-compatibility condition then $p$ satisfies the $\delta_1$-compatibility condition.
\een
Items \eqref{it:monEquivCompare} and \eqref{it:polyEquivCompare} highlight that, as $\delta$ grows, it becomes increasingly difficult for fixed monomials $\alpha$ and $\talpha$ of degree divisible by $\delta$ to be $\delta$-equivalent. As an immediate consequence, as $\delta$ grows, it become more likely that a fixed NC homogeneous polynomial $p$ of degree divisible by $\delta$ satisfies the $\delta$-compatibility condition. In the extreme case, monomials $x^\alpha$ and $x^{\talpha}$ of degree $\delta$ are $\delta$-equivalent if and only if $\alpha=\talpha$. As a result, every degree $\delta$ NC homogeneous polynomial satisfies the $\delta$-compatibility condition. \qed

\end{rmk}

\begin{exa}
\rm 
Let
\[
\alpha=(1,2,2,1) \quad \quad \mathrm{and} \quad \quad \talpha=(2,1,1,2).
\]
Then
\[
\alpha \sim_1 \talpha \quad \mathrm{and} \quad  \alpha \sim_2 \talpha \quad  \mathrm{however} \quad  \alpha \not\sim_4 \talpha.
\]
Now let $p$ be the degree four homogeneous NC polynomial
\[
p(x)=x^\alpha+x^{\talpha}=x_1 x_2 x_2 x_1+x_2 x_1 x_1 x_2.
\]
Then $p$ satisfies the $2$-compatibility condition and the $4$-compatibility condition. However, $p$ does not satisfy the $1$-compatibility condition, since the coefficient of $x_1 x_1 x_2 x_2$ in $p$ is $0$ but the coefficient of $x_1 x_2 x_2 x_1$ is $1$ and
\[
x_1 x_1 x_2 x_2 \sim_1 x_1 x_2 x_2 x_1. \qed
\]
\end{exa}

 The following lemma shows that the $\delta$-compatibility condition is necessary for the general NC Waring problem.

\begin{lem}
\label{lem:condgen}
Suppose a NC homogeneous polynomial $p$ of degree
$\delta d$ in $g$ variables has a $t$-term $(\del,d)$-NC \wdec ,
 then $p$  satisfies the $\delta$-compatibility condition. That is, $P_\alp = P_\talp$  if $\alp \sim_\delta \talp$.
 Here $p$ has  coefficients  $P_\alp$.

Moreover, the $(\del,d)$-NC
 Waring problem  has a
solution over the complex numbers (resp. real numbers) if and only if the equation
\begin{equation} \label {eq:pcoef}
P_\alpha = \sum_{s=1}^t \prod_{\beta \in S^g_\delta}
\left (A^\pars_\beta \right)^{\onetau_\beta}
\qquad \alp \in S_{\del d}^g
\end{equation}
has a solution $A_\beta^\pars \in \mathbb{C}$ (resp. $A_\beta^\pars \in \mathbb{R}$).
\end{lem}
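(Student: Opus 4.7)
The plan is to mimic the proof of Lemma \ref{NecessaryCond} but at one level higher, replacing the role of single indices $1, \ldots, g$ with $\delta$-tuples $\beta \in T^g_\delta$, and the role of linear forms with degree $\delta$ homogeneous forms $H_s$. The key computation is the direct expansion of $\left(H_s(x)\right)^d$; the isomorphism $\tau$ provides the bookkeeping needed to read off coefficients.

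First I would expand a single summand. By distributing the product,
\[
\left(H_s(x)\right)^d = \left( \sum_{\beta \in T^g_\delta} A_\beta^s x^\beta \right)^d = \sum_{(\mu_1,\ldots,\mu_d) \in (T^g_\delta)^d} \left( \prod_{k=1}^d A_{\mu_k}^s \right) x^{\mu_1} x^{\mu_2} \cdots x^{\mu_d}.
\]
Using the isomorphism $\tau: T^g_{\delta d} \to (T^g_\delta)^d$ to reindex the sum by $\alpha \in T^g_{\delta d}$ with $\tau(\alpha) = (\mu_1, \ldots, \mu_d)$, and observing that the coefficient product only records how many times each $\beta \in T^g_\delta$ appears among $\mu_1,\ldots, \mu_d$, I would rewrite
\[
\prod_{k=1}^d A_{\mu_k}^s = \prod_{\beta \in T^g_\delta} \bigl( A_\beta^s \bigr)^{\bmone_\beta^{\tau(\alpha)}}.
\]
Summing over $s = 1, \ldots, t$ and comparing with $p(x) = \sum_{\alpha \in T^g_{\delta d}} P_\alpha x^\alpha$ then yields, monomial by monomial,
\[
P_\alpha = \sum_{s=1}^t \prod_{\beta \in T^g_\delta} \bigl( A_\beta^s \bigr)^{\bmone_\beta^{\tau(\alpha)}},
\]
which is exactly equation \eqref{eq:pcoef}. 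This establishes the second claim of the lemma in both directions at once, since every step is reversible: given a solution $\{A_\beta^s\}$ to \eqref{eq:pcoef}, the $H_s(x) = \sum_\beta A_\beta^s x^\beta$ reassemble into a $(\delta,d)$-NC Waring decomposition of $p$.

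The necessity of the $\delta$-compatibility condition is then immediate from \eqref{eq:pcoef}. Indeed, the right-hand side depends on $\alpha$ only through the multiset of values $\{\bmone_\beta^{\tau(\alpha)}\}_{\beta \in T^g_\delta}$. Hence, if $\alpha \sim_\delta \talpha$, meaning $\bmone_\beta^{\tau(\alpha)} = \bmone_\beta^{\tau(\talpha)}$ for every $\beta \in T^g_\delta$, the right-hand side takes identical values at $\alpha$ and $\talpha$, forcing $P_\alpha = P_\talpha$.

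The routine calculation is straightforward once the notation is unpacked; the only real hurdle is notational hygiene around $\tau$ and the distinction between the indicator $\bmone_\beta^\mu$ counting $\delta$-tuples in a $d$-tuple of $\delta$-tuples, and the original single-index indicator $\bmone_i^\alpha$. Since the argument is a direct generalization of the $\delta=1$ case in Lemma \ref{NecessaryCond}, I do not expect substantive new obstacles beyond carefully invoking the isomorphism $\tau$ at the right moments.
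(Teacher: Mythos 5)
Your proof matches the paper's argument essentially line for line: expand $(H_s(x))^d$ by distributing, reindex via the isomorphism $\tau$ from $(T^g_\delta)^d$ to $T^g_{\delta d}$, collapse the product $\prod_k A^s_{\mu_k}$ into $\prod_\beta (A^s_\beta)^{\bmone^{\tau(\alpha)}_\beta}$, compare coefficients to obtain \eqref{eq:pcoef}, and read off the $\delta$-compatibility condition because the right-hand side depends on $\alpha$ only through the counts $\bmone^{\tau(\alpha)}_\beta$. This is the same approach as the paper and it is correct.
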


\begin{proof}
The polynomial $p$ has a $t$-term $(\delta,d)$-NC \wdec \,
iff $\exists$ $\delta^{th}$ degree  homogeneous polynomials,
$H_1, H_2, \dots, H_t$ satisfying
\begin{align}
\sum_{\alpha \in \tgdd} P_\alpha x^\alpha
& = \sum_{s=1}^t [H_s(x)]^d
  = \sum_{s=1}^t \left [\sum_{\beta \in S^g_\delta}
  A_{\beta}^\pars x^{\beta} \right ]^d\\
& = \sum_{s=1}^t   \sum_{\alpha \in \tgdd }
 \left( \prod_{ \substack { 1 \le j \le d}}
 A_{\tau(\alpha)_j}^\pars x^{\alpha_j}\right) \\
 & = \sum_{\alpha \in \tgdd}
 \left( \sum_{s=1}^t \prod_{ \substack { 1 \le j \le d}}
 A_{\tau(\alpha)_j}^\pars \right) x^\alpha.
 \label{eq:pmain}
\end{align}
\newline
Comparing coefficients we see, equivalent to the $(\delta, d)$-NC Waring
decomposition is:
$$
P_\alpha  = \sum_{s=1}^t \prod_{ \substack {
1 \le j \le d}} A_{\tau(\alpha)_j}^\pars
= \sum_{s=1}^t \prod_{\substack{{j_1, \dots,j_\delta }\\
1 \le j_k \le g}}^g
\left(  A_{(j_1, \dots,j_\delta)}^\pars \right)^{\bmone_{(j_1, \dots,j_\delta)}^{\tau(\alpha)}}
= \sum_{s=1}^t \prod_{\beta \in S^g_\delta} \left (A^\pars_\beta \right)^{\onetau_\beta},
$$
yielding \eqref{eq:pcoef}.

As a consequence
$
P_\alpha = P_{\talpha}
$
for any
$\alpha$
satisfying
$\onetau_\beta = \onetaut_\beta$
for every
$
\beta \in S^g_\delta
$, yielding the first assertion of the theorem.
\end{proof}

\begin{exa}
\rm
Let
$$
    p(x) = (x_1x_2 + x_1^2)(x_2x_1 + x_1^2)
      = x_1x_2^2x_1 + x_1x_2x_1^2 + x_1^2x_2x_1 + x_1^4.
$$
Then $p$ is an example where there is no $(\del,d)=(2,2)$-NC
\wdec; indeed
the $2$-compatibility condition is violated because
$
P_{(1,1,1,2)} = 0 \neq 1 = P_{(1,2,1,1)}
$.
However, its commutative collapse does have the (2,2)-\wdec :
\begin{equation*}
    p_c(X)=X_1^2X_2^2 + 2 X_1^3 X_2 + X_1^4
          = (X_1X_2 + X_1^2)^2.  \qquad \qed
\end{equation*}

\end{exa}

\subsection{Reduction to classical Waring in more variables}
\label{sec:GenWaringReduction}
\ \ \ \

To solve the general $(\delta,d)$-\nc \  Waring  problem we reduce to the  $\delta=1$ case solved by Theorem \ref{thm:LinearWaring}.
This reduction is accomplished by identifying a monomial $x^\beta$
with a new variable $z_\beta$. Namely, fix $\delta$ and define the map $\phi$ on monomials of the form $x^\beta$ for $\beta \in S_\delta^g$ by
$$
\phi (x^\beta) := z_\beta \qquad \mathrm{for \ each \ } \beta \in S_\delta^g
$$
where the $z_\beta$ are noncommutative indeterminates indexed by elements of $S_\delta^g$. 

We extend our definition of $\phi$ to a noncommutative homogeneous polynomial
\[
p(x)=\sum_{\mu \in (S_\delta^g)^d} P_\mu x^{\mu_1} x^{\mu_2} \cdots x^{\mu_d}
\]
 of degree $\delta d$
 by
\beq
\label{eq:phiOnPolys}
\phi(p(x))=\sum_{\mu \in (S_\delta^g)^d} P_\mu \phi(x^{\mu_1}) \phi(x^{\mu_2}) \cdots \phi(x^{\mu_d})=\sum_{\mu \in (S_\delta^g)^d} P_\mu z_{\mu_1} z_{\mu_2}  \cdots z_{\mu_d}.
\eeq

\begin{lem}
\label{lem:phiAlgIso}
The map $\phi$ as defined in equation \eqref{eq:phiOnPolys} defines an algebra isomorphism on the algebra of noncommutative homogeneous polynomials of degree divisible by $\delta$ in the noncommutative indeterminate $x=(x_1,x_2, \dots, x_g)$ which maps to the algebra of noncommutative homogeneous polynomials in the noncommutative indeterminates $\{z_\beta\}_{\beta \in S_\delta^g}$.
\end{lem}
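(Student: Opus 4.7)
The plan is to prove the three standard requirements for an algebra isomorphism---linearity (well-definedness on the whole algebra), multiplicativity, and bijectivity---by exploiting the tuple-grouping bijection $\tau : T_{\delta d}^g \to (T_\delta^g)^d$ introduced in the remark preceding the lemma. The essential content is that $\tau$ induces a bijection on monomial bases that is automatically compatible with concatenation, and everything else follows by linear extension.

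First I would make well-definedness explicit. The algebra $\mathcal{A}_\delta$ of NC homogeneous polynomials of degree divisible by $\delta$ in $x_1,\dots,x_g$ has, as a vector space, the basis $\{x^\alpha : \alpha \in T_{\delta d}^g,\ d \geq 0\}$. By the preceding remark, for any such $\alpha$ the tuple $\tau(\alpha) \in (T_\delta^g)^d$ groups the indices uniquely into $d$ consecutive $\delta$-blocks. Writing $\tau(\alpha)=(\mu_1,\dots,\mu_d)$ with each $\mu_j \in T_\delta^g$, the definition \eqref{eq:phiOnPolys} reads $\phi(x^\alpha) = z_{\mu_1} z_{\mu_2} \cdots z_{\mu_d}$, which is an unambiguous element of the free algebra $\mathcal{B}$ generated by the $\{z_\beta\}_{\beta \in T_\delta^g}$. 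Extending $\mathbb{C}$-linearly, $\phi:\mathcal{A}_\delta \to \mathcal{B}$ is a well-defined linear map.

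Next I would verify multiplicativity on the basis. If $\alpha \in T_{\delta d_1}^g$ and $\alpha' \in T_{\delta d_2}^g$, then the concatenation $\alpha\alpha'$ lies in $T_{\delta(d_1+d_2)}^g$, and since $\tau$ groups indices in consecutive blocks of size $\delta$ the identity
\[
\tau(\alpha \alpha') \;=\; (\tau(\alpha)_1,\dots,\tau(\alpha)_{d_1},\tau(\alpha')_1,\dots,\tau(\alpha')_{d_2})
\]
holds---this is the one place where one must be slightly careful, namely that $\delta$ divides both degrees so no $\delta$-block straddles the boundary. Consequently
\[
\phi(x^\alpha x^{\alpha'}) \;=\; z_{\tau(\alpha)_1}\cdots z_{\tau(\alpha)_{d_1}} z_{\tau(\alpha')_1}\cdots z_{\tau(\alpha')_{d_2}} \;=\; \phi(x^\alpha)\,\phi(x^{\alpha'}).
\]
Bilinearity then gives $\phi(pq)=\phi(p)\phi(q)$ for all $p,q \in \mathcal{A}_\delta$.

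Finally, bijectivity: the map $\tau^{-1}: (T_\delta^g)^d \to T_{\delta d}^g$ induces an inverse map $\psi(z_{\mu_1}\cdots z_{\mu_d}) := x^{\tau^{-1}(\mu_1,\dots,\mu_d)}$ on the monomial basis of $\mathcal{B}$, and $\phi\circ\psi$ and $\psi\circ\phi$ act as the identity on the respective monomial bases. Hence $\phi$ is a bijection, and combined with the preceding steps this establishes that $\phi$ is an algebra isomorphism. The only delicate point in the whole argument is ensuring the $\delta$-block alignment used in the multiplicativity check, and the hypothesis that both factors have degree divisible by $\delta$ is precisely what guarantees this.
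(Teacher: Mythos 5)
Your proof is correct and matches the paper's approach: the paper's own proof simply states that the result is ``straight forward from the definition of $\phi$'', and you have supplied precisely the details that phrase elides, with the key observation being that the grouping bijection $\tau$ respects concatenation exactly because both factors have degree divisible by $\delta$, so $\phi$ carries the monomial basis of $\bigoplus_{d\ge 0}\{\text{degree }\delta d\text{ homogeneous polynomials}\}$ bijectively and multiplicatively onto the monomial basis of the free algebra on $\{z_\beta\}_{\beta\in T_\delta^g}$. Your care about $\delta$-block alignment at the product boundary is the one genuinely nontrivial step, and it is right.
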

\begin{proof}
This is straightforward from the definition of $\phi$ on a noncommutative homogeneous polynomial of degree $d\delta$.
\end{proof}

Note that in the case of commutative $X$, substitution of $X^\beta$ by a commutative $Z_\beta$ is sometimes used, however, the isomorphism property in Lemma \ref{lem:phiAlgIso} fails, so conclusions are much less precise than what we get here. 

We now give our main result for the $(\delta,d)$-NC Waring problem.

\begin{thm}
\label{thm:GenNCWaringMain}
Let $p$ be a noncommutative homogeneous polynomial of degree $\delta d$ in the indeterminate $x=(x_1, \dots, x_g)$, and let $\phi$ be as defined in equation \eqref{eq:phiOnPolys}. Then we have the following.
\ben
\item
\label{it:GenMain1}
$p(x)$ has a $t$-term  $(\del,d)$-\nc \  Waring decomposition if and only if
$\phi(p(x))$ has a $t$-term $(1,d)$-noncommutative Waring decomposition.

\item
\label{it:GenMain2}
$p(x)$ satisfies the $\del$-compatibility condition if and only if $\phi(p(x))$ satisfies the  $1$-compatibility condition.

\item
\label{it:GenMain3}
$p(x)$ has a $t$-term  $(\del,d)$-\nc \  Waring decomposition if and only if $p(x)$ satisfies the $\del$-compatibility condition  and the commutative collapse of $\phi(p(x))$ has a $t$-term $(1,d)$-Waring decomposition.
\een
\end{thm}
\begin{proof}
To prove item \eqref{it:GenMain1}, assume $p(x)$ has a $t$-term $(\delta,d)$-noncommutative Waring decomposition
\[
p(x)= \sum_{s=1}^t  [\sum_{\beta \in S_\del^g } A_\beta \; x^\beta  ]^d.
\]
By Lemma \ref{lem:phiAlgIso}, $\phi$ is an algebra isomorphism so
\[
\phi(p(x))=\phi\left(\sum_{s=1}^t  [\sum_{\beta \in S_\del^g } A_\beta \; x^\beta  ]^d \right) = \sum_{s=1}^t  [\sum_{\beta \in S_\del^g } A_\beta \; \phi(x^\beta)  ]^d = \sum_{s=1}^t  [\sum_{\beta \in S_\del^g } A_\beta \; z_\beta  ]^d.
\]
This shows $\phi(p(x))$ has a $t$-term $(1,d)$ noncommutative Waring decomposition. The reverse direction follows the same reasoning using $\phi^{-1}$ instead of $\phi$.

To prove item \eqref{it:GenMain2} let
\[
p(x)=\sum_{\mu \in (S_\delta^g)^d} P_\mu x^{\mu_1} x^{\mu_2} \cdots x^{mu_d}.
\]
Then
\[
\phi(p(x))=\sum_{\mu \in (S_\delta^g)^d} P_\mu z_{\mu_1} z_{\mu_2}  \cdots z_{\mu_d}.
\]
Observe
\[
(\mu_1, \dots, \mu_d) \sim_1 (\tilde \mu_1, \dots \tilde \mu_d)
\]
where the $\mu_j$ are viewed as elements of the index set $S_\delta^g$ if and only if
\[
(\mu_1, \dots, \mu_d) \sim_\del (\tilde \mu_1, \dots \tilde \mu_d)
\]
where the $\mu_j$ are viewed as as $\delta$ tuples of elements of $S_\del^g$. It follows that
\[
P_{(\mu_1, \dots, \mu_d)}=P_{(\tilde \mu_1, \dots \tilde \mu_d)} \quad \quad \mathrm{for\ all\ } (\mu_1, \dots, \mu_d) \sim_1 (\tilde \mu_1, \dots \tilde \mu_d).
\]
where the $\mu_j$ are viewed as elements of the index set $S_\delta^g$ if and only if
\[
P_{(\mu_1, \dots, \mu_d)}=P_{(\tilde \mu_1, \dots \tilde \mu_d)} \quad \quad \mathrm{for\ all\ } (\mu_1, \dots, \mu_d) \sim_\del (\tilde \mu_1, \dots \tilde \mu_d).
\]
where the $\mu_j$ are viewed as as $\delta$ tuples of elements of $S_\del^g$.

Item \eqref{it:GenMain3} is an immediate consequence of items \eqref{it:GenMain1} and \eqref{it:GenMain2} with Theorem \ref{thm:LinearWaring}, our main result for $(1,d)$-NC  Waring decompositions.
\end{proof}

\subsection{Additional variables are necessary for the reduction}
\label{sec:NeedExtraVars}
\ \ \ \
It is tempting to try to solve the general $(\del,d)$-NC Waring problem by reducing to the commutative case without introducing additional variables. This section will show that this is not possible.

One may hope that the following are true:

\begin{enumerate}
\item
\label{it:comm_WR_general}
\textit{If $p$ is a degree $\delta d$ NC homogeneous polynomial, which satisfies the $\delta$-compatibility condition \eqref{eq:GeneralPaEquiv},
then its commutative collapse $p_c$ has the \wdec \
\begin{equation} \label{comm_WR_general}
p_c(X) = \sum_{s = 1}^t \left ( \sum_{\beta \in S^g_\delta} A^\pars_{\beta} X^{\beta} \right )^d
\end{equation}
(with $X_i$ being commuting variables) if and only if $p$ has the
NC \wdec \
\begin{equation} \label{nc_WR_general}
p(x)=\sum_{s=1}^t \left ( \sum_{\beta \in S^g_\delta} A^\pars_{\beta} x^{\beta} \right )^d.
\end{equation}
}

\item
\label{it:nc_WR_general}
\textit{The commutative collapse $p_c$ of $p$
has a $t$-term  $(\del,d)$-NC Waring decomposition
iff
the commutative collapse
$\phi(p)_c$ of $\phi(p)$ has a $t$-term
$(1,d)$-NC Waring decomposition.}

\end{enumerate}

The following polynomial gives a counter example to both items. Let
\[
p(x)=x_1^4+x_1 x_2 x_2 x_1 + x_2 x_1 x_1 x_2 +x_2^4
\]
and let $\delta=d=2$. Then $p$ satisfies the $2$-compatibility condition. We will show that the commutative collapse of $p$ has a two term $(2,2)$-\wdec \  but that $p$ does not have a two term $(2,2)$-NC  \wdec.

It is straight forward to check
\[
p_c(X)=X_1^4+2X_1^2 X_2^2 +X_2^4=(X_1^2+X_2^2)^2.
\]
Item \eqref{it:comm_WR_general} would imply that
\[
p(x)=(x_1^2+x_2^2)^2=x_1^4+x_1^2 x_2^2+x_2^2 x_1^2 +x_2^4 \neq x_1^4+x_1 x_2 x_2 x_1 + x_2 x_1 x_1 x_2 +x_2^4 = p(x)
\]
which is contradiction. This shows that item \eqref{it:comm_WR_general} cannot be correct.

In fact, $p$ does not have a two term $(2,2)$-NC  \wdec. To check this set
\[
z_{(1,1)}=x_1 x_1 \quad \quad z_{(1,2)}= x_1 x_2 \quad \quad z_{(2,1)}=x_2 x_1 \quad \quad z_{(2,2)}=x_2 x_2.
\]
Then $\phi(p) (z)=z_{(1,1)}^2+z_{(1,2)} z_{(2,1)} +z_{(2,1)} z_{(1,2)} +z_{(2,2)}^2$ satisfies the $1$-compatibility condition but $\phi(p)$ does not have a two term $(1,2)$-Waring decomposition. To see this, note that the tensor corresponding to $\phi(p)$ is a $4 \times 4$ symmetric matrix which has rank $4$, hence a NC Waring decomposition for $\phi(p)$ requires four terms. 
It follows from Theorem \ref{thm:GenNCWaringMain} \eqref{it:GenMain1} that $p$ does not have a two term $(2,2)$-NC Waring decomposition.

\subsection{General NC Waring and tensors}

Standard tensor techniques can also be used to address the general NC Waring problem and to derive Theorem \ref{thm:GenNCWaringMain}. One may identify the space of NC homogeneous polynomials of degree $\delta d$ with the space of tensors $\mathbb (\mathbb C^g)^{\otimes d \delta} \cong \mathbb ((\mathbb C^g)^{\otimes \delta})^{\otimes d}$. Requiring that a NC polynomial $p$ satisfies the $\delta$-compatibility condition then corresponds to requiring that the corresponding tensor $T_p$ satisfies a restricted symmetry condition. In standard tensor notation one must have $T_p \in S^d ((\mathbb C^g)^{\otimes \delta})$. In words, $T_p$ is a symmetric tensor in the space $V^{\otimes d}$ where $V$ is the space $(\mathbb C^g)^{\otimes \delta}$. The result again follows from the fact that a tensor in $V^{\otimes d}$ has a symmetric tensor decomposition if and only if it is symmetric.

While this is an expedient approach for those familiar with tensor methods, we expect the noncommutative algebra approach to be more clear for NC algebra experts who are not familiar with tensor methods. Furthermore, the tensor based approach does not easily  convert to a condensed statement of Theorem \ref{thm:GenNCWaringMain} which only uses the language of noncommutative polynomials.

\newpage

\tableofcontents

\end{document}